\newcommand{\hf}{\mathfrak{h}}
\newcommand{\taub}{\Xi}
\newcommand{\ov}{\widehat}
\newcommand{\Sig}{\Sigma}
\newcommand{\ra}{\rightarrow}
\newcommand{\B}{\mathbb{B}}
\newcommand{\rank}{\mathrm{rk}}
\newcommand\Dual{\mathcal D}
\newcommand\Duality\Dual
\newcommand{\F}{\Z/2\Z}
\newcommand\RelSpinC{\underline{\SpinC}}
\newcommand\relspinc{s}
\newcommand\x{\mathbf x}
\newcommand\z{\mathbf z}
\newcommand\y{\mathbf y}
\newcommand\ModSphere{\ModFlow\left({\mathbb S}\longrightarrow
\Sym^{g-1}(\Sigma_{1})\times \Sym^2(\Sigma_{2})\right)}
\newcommand\ModSpheres\ModSphere
\newcommand\UnparModSp{\widehat \ModSp}
\newcommand\UnparModFlow\UnparModSp
\newcommand\spin{\mathfrak s}
\newcommand{\spinc}{\mathfrak s}
\newcommand{\spinct}{\mathfrak t}
\newcommand\ModMaps{\mathcal M}
\newcommand\ModSp\ModMaps
\newcommand\alphas{\mbox{\boldmath$\alpha$}}
\newcommand\betas{\mbox{\boldmath$\beta$}}
\newcommand\spincrel\relspinc
\newtheorem{thm}{Theorem}[section]
\newtheorem{cor}[thm]{Corollary}
\def\endproof{\relax\ifmmode\expandafter\endproofmath\else
  \unskip\nobreak\hfil\penalty50\hskip.75em\hbox{}\nobreak\hfil\bull
  {\parfillskip=0pt \finalhyphendemerits=0 \bigbreak}\fi}
\def\endproofmath$${\eqno\bull$$\bigbreak}
\def\bull{\vbox{\hrule\hbox{\vrule\kern3pt\vbox{\kern6pt}\kern3pt\vrule}\hrule}}
\newcommand{\Z}{\mathbb{Z}}
\newcommand{\ModSWfour}{\mathcal{M}}
\newcommand{\ModFlow}{\ModSWfour}
\newcommand{\SpinC}{{\mathrm{Spin}}^c}
\newcommand\abuts\Rightarrow
\newcommand\Sym{\mathrm{Sym}}
\newcommand{\Hbb}{{\mathbb{H}}}
\newcommand{\Fbb}{\mathbb{F}}
\newcommand{\lra}{\longrightarrow}
\begin{document}

\title[Knots with a surgery that has simple Floer homology]{ Knots which admit a surgery with simple knot Floer homology groups}%
\author{Eaman Eftekhary}%
\address{School of Mathematics, Institute for Research in Fundamental Sciences (IPM),
P. O. Box 19395-5746, Tehran, Iran}%
\email{eaman@ipm.ir}
\keywords{simple knot Floer homology, L-space surgery}%

\maketitle

\begin{abstract}
We show that if a positive integral surgery on a knot $K$ inside a homology sphere $X$ results in an induced knot $K_n\subset X_n(K)=Y$ which has
simple Floer homology then we should have $n\geq 2g(K)$. Moreover, for $X=S^3$ the three-manifold $Y$ is a $L$-space, and the Heegaard Floer homology groups of $K$ are determined by its Alexander polynomial.
\end{abstract}
\section{Introduction}
The importance of studying knots inside rational homology spheres which have simple knot Floer homology
came up in the study of Berge conjecture using techniques from Heegaard Floer homology by 
Hedden \cite{Matt} and Rasmussen \cite{Ras2}. By definition, a knot $K$ inside a rational 
homology sphere $X$ has simple knot Floer homology if the rank of $\ov{\mathrm{HFK}}(X,K)$ 
is equal to the rank of $\ov{\mathrm{HF}}(X)$ (see \cite{OS-3mfld,Ras1,OS-knot} for the background 
on Heegaard Floer homology and knot Floer homology). 
Berge conjecture about the knots in $S^3$ which
admit a Lens space surgery may almost be reduced to showing that a knot inside a Lens space 
with simple knot Floer homology is simple.\\
For knots inside arbitrary rational homology spheres, it is not clear 
what the topological implications of having simple knot Floer homology are.
With more restriction on the ambient three manifold however, certain conclusions may be made.
In particular, if the ambient manifold $X$ is an integer homology sphere, the author has shown \cite{Ef-rank}
that the only knot with simple knot Floer homology is the trivial knot. In this paper, we prove two more theorems
in this direction. The first theorem is about the knots obtained by small 
surgery from knots inside homology spheres. If $K\subset X$ is a null-homologous knot inside a three-manifold 
$X$, we may remove a tubular neighborhood of $K$ and glue it back in a different way so that the resulting 
manifold $X_n(K)$ is the three-manifold obtained from $X$ by $n$-surgery on $K$. In this situation 
the core of the new solid torus will determine a knot $K_n\subset X_n(K)$. We show:
\begin{thm}
Suppose that $K\subset X$ is a knot inside a homology sphere $X$ of Seifert genus $g(K)$.
Suppose that $0<n<2g(K)$ is a given integer and $K_n\subset X_n(K)$ is the knot obtained from $K$ by
$n$-surgery. Then $K_n$ can not have simple knot Floer homology, i.e.
\begin{equation}
\rank\big(\ov{\mathrm{HFK}}(X_n(K),K_n)\big)>\rank\big(\ov{\mathrm{HF}}(X_n(K))\big)
\end{equation}
\end{thm}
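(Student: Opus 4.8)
The plan is to compute both $\ov{\mathrm{HF}}(X_n(K))$ and $\ov{\mathrm{HFK}}(X_n(K),K_n)$ from a single object---the knot complex $\CFKinf(X,K)$---and to recognize simplicity of $K_n$ as the degeneration of a spectral sequence whose higher differentials are controlled by $n$ and $g=g(K)$. Since $X$ is a homology sphere, $\ov{\mathrm{HF}}(X)\cong\F$, and the Ozsv\'ath--Szab\'o integer surgery formula identifies $\ov{\mathrm{HF}}(X_n(K))$ with the homology of the mapping cone of
\[
\textstyle\bigoplus_{s\in\Z}\ov A_s \xrightarrow{\ \{v_s+h_s\}\ } \bigoplus_{s\in\Z}\ov B_s ,
\]
where $\ov A_s$ is the subquotient $\ov C\{\max(i,j-s)=0\}$ of $\CFKinf(X,K)$, each $\ov B_s\simeq\ov{\mathrm{CF}}(X)$, the vertical map is $v_s\colon\ov A_s\to\ov B_s$ and the diagonal map is $h_s\colon\ov A_s\to\ov B_{s+n}$; the decomposition over $\SpinC$ structures on $X_n(K)$, identified with $\Z/n$, is by the residue of $s$ modulo $n$. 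Passing to a finite truncation (keeping $|s|\le N$ for $N\gg0$) gives a finite complex still computing $\ov{\mathrm{HF}}(X_n(K))$; outside the range $[-g,g]$ one of $v_s,h_s$ is an isomorphism, so after cancelling these tails the effective complex is supported in $|s|\le g$.

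The core $K_n$ has the same exterior as $K$, and its Alexander grading refines the truncated cone: up to an affine rescaling the integer $s$ records the $K_n$--Alexander grading, so that $v_s$ preserves this grading while $h_s\colon\ov A_s\to\ov B_{s+n}$ strictly raises it. Consequently the associated graded of the $K_n$--filtration is the purely vertical cone $\bigoplus_s\mathrm{Cone}(v_s)$, whose homology is $\ov{\mathrm{HFK}}(X_n(K),K_n)$, while reinstating the $h_s$ recovers $\ov{\mathrm{HF}}(X_n(K))$. Working over $\F$ we may replace each $\ov A_s$ by its homology and $v_s,h_s$ by the induced maps $\bar v_s,\bar h_s$; on the finite truncation this yields
\[
\rank\ov{\mathrm{HFK}}(X_n(K),K_n)-\rank\ov{\mathrm{HF}}(X_n(K)) \;=\; 2\bigl(\rank(\bar v+\bar h)-\rank(\bar v)\bigr)\;\ge\;0 ,
\]
so that $K_n$ is simple precisely when the diagonal maps add no rank to the total differential beyond the vertical ones, i.e.\ when the $K_n$--Alexander spectral sequence degenerates at $E_1$.

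It remains to force a strict increase $\rank(\bar v+\bar h)>\rank(\bar v)$ when $0<n<2g$. The input is genus detection: $\ov{\mathrm{HFK}}(X,K,\pm g)\neq0$, so the extremal subquotients $\ov A_{\pm g}$ are nontrivial and contribute generators at the top and bottom of the $K_n$--Alexander grading. Because $n<2g$, the diagonal map $h_{-g}\colon\ov A_{-g}\to\ov B_{n-g}$ out of the bottom generator lands at the index $n-g\in(-g,g)$, which is still in the effective range rather than in the cancelled tail (and symmetrically $\ov B_g$ receives $h_{g-n}$ from the in--range index $g-n$); schematically the staircase of vertical and diagonal maps acquires an extra matched pair relative to the vertical-only differential. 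A direct analysis of this bidiagonal map---using the conjugation symmetry $h_s\leftrightarrow v_{-s}$ to treat the two extremes simultaneously---then shows the diagonal maps raise the total rank by at least one, so the inequality of the theorem is strict. (Heuristically this is transparent in the immersed--curve picture: the multicurve of $K$ has a feature of vertical extent $2g$, and a pairing curve of slope $n<2g$ is too shallow to meet it minimally, so the surplus intersection points cancel for $\ov{\mathrm{HF}}$ only by crossing the basepoint region and hence persist for $\ov{\mathrm{HFK}}$.)

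The crux is twofold. First, one must establish the dual--knot refinement of the surgery cone: that $s$ computes the $K_n$--Alexander grading, that $v_s$ preserves it while $h_s$ raises it, and that this survives the end--capping used to truncate the cone to a finite complex. Second, and harder, one must show that the diagonal contribution identified above is \emph{not} cancelled along a longer alternating zig--zag of vertical and diagonal maps---that the extra matched pair is genuine. It is precisely here that the hypothesis $n<2g$ (rather than a weaker bound phrased through $\tau$ or the local $h$--invariants) is essential, and where the nonvanishing of the extremal groups $\ov{\mathrm{HFK}}(X,K,\pm g)$ does the real work.
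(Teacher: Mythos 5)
Your setup is, modulo notation, the paper's own: the paper also computes both invariants from surgery formulas and characterizes simplicity by the vanishing of induced diagonal maps (its Corollary~\ref{cor:compare}, where the diagonals appear as the chain maps $\Upsilon_\spinc\colon C_n(\spinc)\to C_n(\spinc+n)$ between the complexes of Theorem~\ref{thm:knot-surgery-formula} computing $\ov{\mathrm{HFK}}(X_n(K),K_n,\spinc)$). Your reductions up to that point are sound: over $\Fbb$ the rank identity $\rank\ov{\mathrm{HFK}}-\rank\ov{\mathrm{HF}}=2\big(\rank(\bar v+\bar h)-\rank\bar v\big)$ and the block-triangularity inequality are both correct, granted the dual-knot refinement of the cone (which you may legitimately quote; it is essentially Theorem~\ref{thm:knot-surgery-formula}, imported from \cite{Ef-rank}). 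The genuine gap is that the one substantive step --- the \emph{strict} inequality when $0<n<2g(K)$ --- is asserted rather than proved: ``a direct analysis of this bidiagonal map \dots then shows the diagonal maps raise the total rank by at least one,'' and your closing paragraph concedes that ruling out cancellation ``along a longer alternating zig-zag'' is the hard, unresolved point. As submitted, the proposal proves nothing beyond the easy non-strict inequality, which holds for every $n$.

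Moreover, the difficulty you flag is a misdiagnosis, and seeing why shows how to close the gap. Because $\bar h$ strictly shifts the index $s$, the comparison is governed by the spectral sequence of the $s$-filtration: its $E_1$ page is $\ov{\mathrm{HFK}}(X_n(K),K_n)$, it converges to $\ov{\mathrm{HF}}(X_n(K))$, and higher differentials (your zig-zags) can only lower the limiting rank further --- they can never restore equality. So strictness follows from a \emph{single} nonzero $d_1$, i.e.\ one class in $\ker(\bar v_s)_*$ whose $\bar h$-image is not in $\mathrm{im}(\bar v_{s+n})_*$; no non-cancellation analysis is needed. The paper produces exactly such a class at $\spinc=g=g(K)$. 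If $(\Upsilon_g)_*=0$, then the inclusion-induced map $\Hbb\{g\}\to\Hbb$ must be injective (otherwise a cycle $\x\in\B\{g\}$ with $\x=d_\B\w$ yields the cycle $(\x,0,\w)\in C_n(g)$ with nonzero $\Upsilon_g$-image) and its image must be disjoint from that of $\Hbb\{>n-g\}\to\Hbb$. By genus detection pick a generator $\x$ of $\Hbb\{g\}\neq 0$; injectivity gives $[\x]\neq0$ in $\Hbb$, so $\x$ is not a $d_\B$-boundary. Now $n<2g$ enters in one line: $n-g\leq g-1$, so the \emph{same} $\x$ is a closed, non-bounding generator of $\B\{>n-g\}$, whence $(\x,\x,0)$ is a $d_n$-cycle in $C_n(g)$ whose $\Upsilon_g$-image represents the generator $\Xi(\x)$ of $H_*(C_n(g+n))\cong\Hbb\{-g\}$ --- nonzero, a contradiction. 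Your heuristic (extremal generators plus $n<2g$ keeping the diagonal image in the active window) points at the right spot, but this explicit cycle, or an equivalent argument, is precisely what your proposal is missing.
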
 
When the surgery coefficient $n$ is greater than or equal to $2g(K)$, there is more freedom for choosing 
$K$ so that $K_n$ has simple knot Floer homology. 
In particular, a necessary and sufficient condition may be given when $X$ is a  $L$-space.
In order to state the precise theorem, for a knot $K$ inside the homology sphere $L$-space $X$ 
let $\B=\B(K)$ denote the vector space $\ov{\mathrm{HFK}}(X,K;\Z/2\Z)$ and let $d_\B:\B\ra \B$ 
denote the differential obtained by counting the disks passing through the second marked point in 
a doubly pointed Heegaard diagram associated with the pair $(X,K)$. The homology group $\mathrm{H}_*(\B(K),d_\B)$
is then equal to $\ov{\mathrm{HF}}(X;\Z/2\Z)=\Z/2\Z$. Let $d(X)$ denote the homological degree
of the generator of this later vector space.
\begin{thm}\label{thm:main}
Let $K\subset X$ be a knot in a homology sphere $L$-space $X$ of Seifert genus $g(K)$.
If for an integer $n\geq 2g(K)$ the knot $K_n\subset X_n(K)$ has simple knot Floer homology, 
$X_n(K)$ is a $L$-space and
 there is an increasing sequence of integers
$$-g(K)=n_{-k}<n_{1-k}<...<n_k=g(K)$$
with $n_i=-n_{-i}$ for which the following is true. For $i\in \Z$ with $|i|\leq g(K)$ define
$$\delta_i=\begin{cases}
0+d(Y) &\text{if }i=g(K)\\
\delta_{i+1}+2(n_{i+1}-n_i)-1 &\text{if } i<g(K),\ \&\ g(K)-i\equiv 1\ (\text{mod }2)\\
\delta_{i+1}+1 &\text{if } i<g(K),\ \&\ g(K)-i\equiv 0\ (\text{mod }2).\\
\end{cases} $$
In this situation, $\ov{\mathrm{HFK}}(K,\spinc)=0$ unless $\spinc=n_i$ for some $-k\leq i\leq k$, in which case
$\ov{\mathrm{HFK}}(K,\spinc)=\Fbb$ and it is supported entirely in homological degree $\delta_i$.
 If the generator of $\ov{\mathrm{HFK}}(K,n_i)$
is denoted by $\x_i$, the filtered chain complex $(\B(K),d_\B)$ may be described (up to quasi-isomorphism) as
$\B=\langle \x_{-k},...,\x_k\rangle$, where the differential is given by
$$d_\B(\x_i)=\begin{cases}
0\ &\text{if }i=k,\ \text{or } 0<k-i \text{ is odd,}\\
\x_{i+1}\ &\text{otherwise.}
\end{cases}$$
Moreover, if for a knot $K$ inside a homology sphere $L$-space $X$ the filtered chain complex
$(\B(K),d_\B)$ has the above form, for any integer $n\geq 2g(K)$, $X_n(K)$ is a 
$L$-space and $K_n\subset X_n(K)$ has simple knot Floer homology. 
\end{thm}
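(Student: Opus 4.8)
The plan is to route everything through the large-surgery (mapping cone) description of $X_n(K)$ and of the core knot $K_n$, and to read off both the $L$-space condition and the simple-ness condition as combinatorial constraints on the knot complex of $K$. Fix a reduced model of the knot Floer complex, so that its generators are identified with $\HFKa(X,K)$ and the vertical differential vanishes; the remaining (Alexander-grading-dropping) differential is exactly $d_\B$, and $(\B,d_\B)$ is the reduced model of $\CFa(X)$ with its knot filtration. Because $n\geq 2g(K)$ is large, the surgery formula identifies $\HFa(X_n(K),[s])$ with the homology of the hook complex $\widehat A_s=C\{\max(i,j-s)=0\}$, one for each of the $n$ spin$^c$ structures $[s]$ (represented by $|s|\le n/2$). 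Since $K_n$ generates $H_1(X_n(K))=\Z/n\Z$, its relative spin$^c$ structures form a $\Z$-torsor, so the core knot supplies a $\Z$-filtration on $\CFa(X_n(K))$; the first step is to identify this filtered complex with the surgery mapping cone filtered by the integer $s$, so that the associated graded computes $\HFKa(X_n(K),K_n)$ while the total homology computes $\HFa(X_n(K))$.

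Granting this identification, ``simple'' becomes the statement that the $s$-filtration spectral sequence of the mapping cone degenerates at $E_1$ (no rank drop). I would first extract the $L$-space conclusion: $X_n(K)$ is an $L$-space exactly when $\rank \HFa(X_n(K))=n$, i.e. when $H_*(\widehat A_s)=\F$ for every $s$. For $|s|\ge g(K)$ this holds automatically (there $\widehat A_s\simeq \CFa(X)\simeq \F$ since $X$ is an $L$-space), so the content is for $|s|<g(K)$. The claim is that simple-ness of $K_n$ forces $H_*(\widehat A_s)=\F$ for all $s$: any extra homology in some $\widehat A_s$ either survives into $\HFKa(X_n(K),K_n)$ without surviving into $\HFa(X_n(K))$, producing a strict rank inequality, or is cancelled by a filtration-dropping differential, again forcing a strict inequality. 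This is the equality case of the rank inequality underlying Theorem 1, and carrying out the bookkeeping so that equality pins down $H_*(\widehat A_s)=\F$ for every $s$ is the crux of the forward direction.

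Once every $\widehat A_s$ has rank-one homology, structure theory forces $\CFKinf(X,K)$ to be a staircase: $\HFKa(X,K,\spinc)$ is then nonzero (and one-dimensional) only at a symmetric set of Alexander gradings $-g=n_{-k}<\dots<n_k=g$, and $(\B,d_\B)$ is the corresponding hat-staircase, with the nonzero differentials $d_\B(\x_i)=\x_{i+1}$ occurring exactly along the steps dictated by the staircase, reproducing the stated rule and giving $H_*(\B,d_\B)=\F=\HFa(X)$. The Maslov gradings $\delta_i$ are then computed by descending induction from the top generator $\x_k$ (whose absolute grading is pinned by the generator of $\HFa(X)$): a short step changes the grading by $1$, while a long step from $n_i$ to $n_{i+1}$ changes it by $2(n_{i+1}-n_i)-1$, which is exactly the displayed recursion.

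For the converse I would run the same machinery in reverse. If $(\B,d_\B)$ has the prescribed staircase form then, for the symmetric $L$-space-type complexes under consideration, this hat-level data determines $\CFKinf(X,K)$ up to filtered quasi-isomorphism (the staircase is rigid), so each $\widehat A_s$ has rank-one homology, giving $\rank\HFa(X_n(K))=n$ and hence $X_n(K)$ an $L$-space for every $n\ge 2g(K)$; feeding the same staircase into the mapping-cone description of $K_n$ shows the $s$-filtration spectral sequence degenerates, so $K_n$ is simple. The main obstacle, in both directions, is the precise identification of the core-knot filtered complex with the filtered mapping cone together with the exact rank and grading bookkeeping in the range $|s|<g(K)$; a secondary subtlety is justifying that the hat-level object $(\B,d_\B)$ controls the full complex $\CFKinf$, which is what makes a statement phrased purely in terms of $(\B,d_\B)$ possible.
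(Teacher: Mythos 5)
Your overall architecture --- two surgery descriptions, a staircase structure theorem, the grading recursion, and a converse by direct computation --- parallels the paper's, but the crux of your forward direction has a genuine gap. You claim that simplicity of $K_n$ forces $H_*(\widehat{A}_s)$ to have rank one for every $s$ (i.e.\ that $X_n(K)$ is an $L$-space) and only then invoke structure theory to obtain the staircase. The dichotomy you offer in support does not parse: as you yourself note, for $n\geq 2g(K)$ the large surgery theorem identifies $H_*(\widehat{A}_s)$ with $\ov{\mathrm{HF}}(X_n(K),[s])$ itself, so a class of $H_*(\widehat{A}_s)$ cannot ``survive into $\ov{\mathrm{HFK}}(X_n(K),K_n)$ without surviving into $\ov{\mathrm{HF}}(X_n(K))$'' --- it is by definition a class of $\ov{\mathrm{HF}}(X_n(K))$. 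Nor does $\rank\, H_*(\widehat{A}_s)>1$ contradict simplicity on its face: simplicity only demands that the associated graded of the core-knot filtration have the same total rank as $H_*(\widehat{A}_s)$, whatever that rank is; to reach a contradiction you must exhibit a nonvanishing differential in the spectral sequence. That is exactly the content you defer as ``bookkeeping,'' and it is where all the work lies. The paper does this work in Theorem~\ref{thm:large-surgery}: simplicity is first translated (Corollary~\ref{cor:compare}) into the vanishing of the maps $\epsilon_\spinc=q_{-\spinc}\circ\taub_\spinc\circ\tau_\spinc:\Hbb\{<\spinc\}\to\Hbb\{\leq-\spinc\}$, and an induction downward from the top Alexander grading, using at each step that $H_*(\B,d_\B)=\Fbb$, forces each nonzero $\Hbb\{n_i\}$ to have rank one and pins down the zig-zag differential. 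The $L$-space property of $X_n(K)$ is then a consequence of the staircase, not a stepping stone toward it; your plan inverts this order, and that inversion is precisely where it breaks.

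There is a secondary gap in your converse. Routing through $\CFKinf$ makes you need the assertion that the hat-level staircase $(\B,d_\B)$ determines $\CFKinf(X,K)$ up to filtered quasi-isomorphism (``the staircase is rigid''); you flag this but give no argument, and it is not formal, since the column $C\{i=0\}$ with its Alexander filtration does not in general determine the full complex. The paper never needs such a statement: both of its surgery descriptions --- Theorem~\ref{thm:knot-surgery-formula} for the core knot $K_n$ and Theorem~\ref{thm:surgery-formula} for $\ov{\mathrm{HF}}(X_n(K))$ --- are phrased purely in terms of $(\B,d_\B)$, so its converse reduces to a finite check with the staircase (in each congruence class modulo $n$ at most one relative $\SpinC$ class carries nonzero homology), with no appeal to $\CFKinf$ at all.
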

This gives a complete classification of knots with simple knot Floer homology 
which are obtained by surgery on a knot $K\subset S^3$ in terms of hat Heegaard Floer 
homology of the knot. In particular, if $n$-surgery on a
knot $K\subset S^3$ has simple knot Floer homology, $n\geq 2g(K)$ and 
all the coefficients of the symmetrized Alexander polynomial associated with $K$ are equal to $1$.\\
We hope that the techniques used here are useful in the study of general knots with simple knot Floer 
homology, although understanding such knots in its full generality requires significant breakthroughs at 
this point. In particular, Berge conjecture is wide open from this prospective.\\
\\
{\bf{Acknowledgement.}} I would like to thank Matt Hedden for bringing up this problem and helpful discussions during a visit to MSRI. I would also like to thank MSRI for providing us with an opportunity for such interactions.
\section{Knot Floer homology background}
\subsection{Relative $\SpinC$ structures and rationally null-homologous knots}
Let $X$ be a homology sphere and $K$ be a knot inside $X$.
Consider a tubular neighborhood nd$(K)$ of $K$ and let $T$ be the torus boundary of this neighborhood. Let
$\mu\subset T$ be a meridian of $K$, i.e. $\mu$ bounds a disk in nd$(K)$, and let $\lambda\subset T$ be a
zero framed longitude for $K$, which is a curve that is isotopic to $K$ in nd$(K)$ and bounds a Seifert
surface $S$ for $K$ in $X-\mathrm{nd}(K)$.
We may assume that $\lambda$ and
$\mu$ intersect each other in a single transverse point. Having fixed these two curves, by $(p,q)$-surgery on
$K$ we mean removing nd$(K)$ and replacing for it a solid torus so that the simple closed curve $p\mu+q\lambda$
bounds a disk in the new solid torus. Denote the resulting three-manifold by $X_{p/q}(K)$. The core of the
new solid torus would be an image of $S^1$ in $X_{p/q}(K)$ which will be denoted by $K_{p/q}\subset X_{p/q}(K)$.
Let $\Hbb_{p/q}(K)$ be the Heegaard Floer homology group $\ov{\mathrm{HFK}}(X_{p/q}(K),K_{p/q};\Z/2\Z)$.
Relative $\SpinC$ structures on $X-\mathrm{nd}(K)$ which reduce to the translation invariant vector field
on the boundary form an affine space $\RelSpinC(X,K)$ over $\mathrm{H}^2(X,K;\Z)$, and clearly we will have
$\RelSpinC(X,K)=\RelSpinC(X_{p/q}(K),K_{p/q})$ in a natural way. The group $\Hbb_{p/q}(K)$ is decomposed
into subgroups associated with relative $\SpinC$ structures:
\begin{equation}
\Hbb_{p/q}(K)=\bigoplus_{\spinc\in \RelSpinC(X,K)}\Hbb_{p/q}(K,\spinc).
\end{equation}
There is a natural involution
\begin{equation}
J:\RelSpinC(X,K)\lra \RelSpinC(X,K)
\end{equation}
which takes a $\SpinC$ class $\spinc$ represented by a nowhere vanishing vector field $V$
on $X-\mathrm{nd}(K)$, to the $\SpinC$ class $J(\spinc)$ represented by $-V$. The difference
$\spinc-J(\spinc)\in \mathrm{H}^2(X,K;\Z)$ is usually denoted by $c_1(\spinc)$.
There is a symmetry in knot Floer homology 
which may be described by the following formula
\begin{equation}\label{eq:duality}
\ov{\mathrm{HFK}}(X,K,\spinc)\simeq \ov{\mathrm{HFK}}(X,K,J(\spinc)+\mathrm{PD}[\mu]).
\end{equation}
Since $X$ is a homology sphere, the cohomology group $\mathrm{H}^2(X,K;\Z)$ is generated
by the class $\mathrm{PD}[\mu]$ and we may thus naturally identify $\mathrm{H}^2(X,K;\Z)$
with $\Z$. We then have a map
\begin{equation}
\begin{split}
&\hf:\RelSpinC(X,K)\lra \Z=\mathrm{H}^2(X,K;\Z)\\
&\hf(\spinc):=\frac{c_1(\spinc)-\mathrm{PD}[\mu]}{2}
\end{split}
\end{equation}
which satisfies $\hf(\spinc)=-\hf(J(\spinc)+\mathrm{PD}[\mu])$ for all relative $\SpinC$ classes
$\spinc\in\RelSpinC(X,K)$. Using this map $\RelSpinC(X,K)$ may also be identified with $\Z$ in a
natural way.\\
The following theorem of Ozsv\'ath and Szab\'o \cite{OS-genus}, generalized by Ni \cite{Ni}
to rationally null-homologous knots,  allows us compute the genus of a knot $K\subset X$,
using Heegaard Floer homology:
\begin{thm}
If $K\subset X$ is a knot inside a homology sphere $X$ as above, the Seifert genus $g(K)$
of $K$ may be computed from
\begin{equation}
g(K)=\max \big\{\spinc\in\RelSpinC(X,K)=\Z\ \big|\ \ov{\mathrm{HFK}}(X,K;\spinc)\neq 0\big\}.
\end{equation}
\end{thm}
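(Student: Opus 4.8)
The plan is to prove the two inequalities separately. Write $g = g(K)$, and recall that $\RelSpinC(X,K)$ is identified with $\Z$ via $\hf$, so the assertion is that the top grading in which $\ov{\mathrm{HFK}}$ is supported equals $g$. First I would establish the soft bound $\ov{\mathrm{HFK}}(X,K,\spinc) = 0$ whenever $\hf(\spinc) > g$. Fix a minimal genus Seifert surface $S$ for $K$ and build a doubly pointed Heegaard diagram $(\Sigma, \alphas, \betas, w, z)$ for $(X,K)$ subordinate to $S$ in the sense of Ozsv\'ath--Szab\'o: the Heegaard surface is obtained by stabilizing a neighborhood of $S$, with the basepoints $w$ and $z$ placed on the two sides of $S$. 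The relative grading $\hf(\spinc(\x))$ of a generator $\x$ is then computed by a signed intersection count against $S$, and an Euler-characteristic estimate bounds this count by $g$. Hence no generator, and therefore no homology, can sit above grading $g$, giving $\max \leq g$. Using the duality \eqref{eq:duality} together with $\hf(\spinc) = -\hf(J(\spinc)+\PD[\mu])$, the support of $\ov{\mathrm{HFK}}$ is symmetric under $i \mapsto -i$, so it suffices to treat the extremal grading.

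The hard direction is the nonvanishing $\ov{\mathrm{HFK}}(X,K,g) \neq 0$, and the plan is to reinterpret this group as a sutured Floer homology and then feed in Gabai's sutured manifold theory. Let $M = X \setminus \mathrm{nd}(K)$, viewed as a balanced sutured manifold $(M,\gamma)$ with two meridional sutures, so that $\mathrm{SFH}(M,\gamma) \cong \ov{\mathrm{HFK}}(X,K)$. Decompose $(M,\gamma)$ along the minimal genus surface $S$ to obtain $(M',\gamma')$. By Juh\'asz's surface decomposition formula, $\mathrm{SFH}(M',\gamma')$ is the direct summand of $\mathrm{SFH}(M,\gamma)$ carried by the outermost relative $\SpinC$ structures, and this summand is precisely the top grading group $\ov{\mathrm{HFK}}(X,K,g)$. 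It therefore remains to show $\mathrm{SFH}(M',\gamma') \neq 0$.

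This is where the depth of the theorem lies, and where Gabai's work enters: because $S$ realizes the Thurston norm it is taut, so the decomposition is taut and $(M',\gamma')$ begins a taut sutured manifold hierarchy terminating in a product sutured manifold. Juh\'asz's theorem that a taut balanced sutured manifold has nonzero sutured Floer homology (with $\mathrm{SFH}$ of a product sutured manifold equal to $\F$) then forces $\mathrm{SFH}(M',\gamma') \neq 0$, completing the lower bound $\max \geq g$. Alternatively, for $X = S^3$ one may run the original Ozsv\'ath--Szab\'o argument: identify $\ov{\mathrm{HFK}}(X,K,g)$ with $\HFp$ of the zero-surgery $X_0(K)$ in the extremal $\SpinC$ structure, invoke Gabai's taut foliation adapted to $S$ together with the Eliashberg--Thurston perturbation to a tight contact structure, and use nonvanishing of the associated Ozsv\'ath--Szab\'o contact invariant to conclude $\HFp(X_0(K)) \neq 0$ in that $\SpinC$ structure.

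The main obstacle is precisely this nonvanishing step. The upper bound is a routine diagrammatic estimate, but establishing that the extremal group survives requires the full strength of Gabai's taut foliation and sutured hierarchy machinery together with a Floer-theoretic nonvanishing input, namely Juh\'asz's sutured nonvanishing theorem or, in the surgery picture, the contact invariant via Eliashberg--Thurston. The technical heart that must be handled with care is the identification of the extremal $\ov{\mathrm{HFK}}$ group with the correct sutured (or surgery) Floer group, keeping precise track of the relative $\SpinC$ identification so that \emph{top grading} matches \emph{outer $\SpinC$ structures}.
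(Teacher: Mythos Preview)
The paper does not prove this theorem at all: it is stated as a known result, attributed to Ozsv\'ath--Szab\'o \cite{OS-genus} and its generalization by Ni \cite{Ni}, and is simply quoted as background. So there is no proof in the paper to compare against.

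That said, your sketch is essentially the standard argument and is correct in outline. The upper bound via a Heegaard diagram adapted to a Seifert surface is the routine adjunction-type estimate. For the nonvanishing at the top grading you have accurately identified the two known routes: the sutured Floer approach (Juh\'asz's decomposition formula plus Gabai's sutured hierarchy, which is how Ni proceeds in the general case), and the original Ozsv\'ath--Szab\'o argument via $\HFp$ of zero-surgery, taut foliations, and the contact invariant. Either route works; the sutured approach is cleaner for knots in general homology spheres. Your assessment of where the depth lies---the nonvanishing step requiring Gabai's machinery---is exactly right.
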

\subsection{Surgery formulas}
Suppose that $(\Sig,\alphas,\betas;u,v)$ is a Heegaard diagram for the knot $K$, such that
$\betas=\betas_0\cup\{\mu=\beta_g\}$, $(\Sig,\alphas,\betas_0)$ is a Heegaard diagram for
$X-\mathrm{nd}(K)$, while $\mu=\beta_g$ represents the meridian of $K$ and the two marked points
$u$ and $v$ are placed on the two sides of $\beta_g$. Think of the vector space
$\B=\ov{\mathrm{HFK}}(X,K;\Z/2\Z)=\Hbb_\infty(K)$ as a vector space computed as
$\B=\ov{\mathrm{HF}}(\Sig,\alphas,\betas;u,v)$.
Letting holomorphic disks pass through the marked point
$v$ in the Heegaard diagram gives a map $d_\B:\B\ra \B$, which is a filtered differential on the
filtered vector space $\B$, with the
filtration induced by relative $\SpinC$ structures.  The homology of the complex $(\B,d_\B)$ gives
$\ov{\mathrm{HF}}(X;\Z/2\Z)=H_*(\B,d_\B)$.
For a relative $\SpinC$ class $\spinc\in\RelSpinC(X,K)=\Z$ we set
$$\B\{\geq \spinc\}=\bigoplus_{\substack{\spinct\in \RelSpinC(X,K)=\Z\\ \spinct\geq \spinc}}\B(\spinct).$$
Then the subspace $\B\{\geq \spinc\}$ of $\B$ is mapped to itself by the differential $d_\B$ of $\B$.
Furthermore, let $\imath_\spinc:\B\{\geq \spinc\}\ra \B$ be the inclusion map, and
denote the homology of $(\B,d_\B)$ by $\Hbb$ and the homology of $\B\{\geq \spinc\}$ by $\Hbb\{\geq \spinc\}$.\\


The following theorem which gives an explicit formula for the groups $\Hbb_{n}(K,\spinc)$
is proved in \cite{Ef-rank}:
\begin{thm}\label{thm:knot-surgery-formula}
Suppose that $K\subset X$  is a  knot inside a  homology sphere $X$.
 With the above notation fixed, the  group $\ov{\mathrm{HFK}}(X_{n}(K),K_{n},\spinc;\Z/2\Z)$
 may be computed as the homology of the complex
$$C_{n}(\spinc)=\B\{\geq \spinc\}\oplus
\B\{\geq n+1-\spinc\}\oplus \B,$$
which is equipped with a differential $d_{n}:C_{n}(\spinc)\ra C_{n}(\spinc)$ defined by
$$d_{n}(\x,\y,\z)=(d_\B(\x),d_\B(\y),d_\B(\z)+\imath_{\spinc}(\x)+\imath_{n+1-\spinc}(\y)).$$
\end{thm}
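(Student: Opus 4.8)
The plan is to compute $\ov{\mathrm{HFK}}(X_n(K),K_n)$ from a Heegaard diagram adapted to the surgery and then to reorganize its generators and differential into the stated mapping cone. Starting from the diagram $(\Sig,\alphas,\betas_0\cup\{\mu\};u,v)$ for $K$, I would replace the meridian $\mu=\beta_g$ by the $n$-surgery curve $\gamma_g=n\mu+\lambda$ and place two basepoints $w,z$ on the two sides of $\gamma_g$. Since $\gamma_g$ bounds a disk in the new solid torus, the resulting doubly pointed diagram $(\Sig,\alphas,\betas_0\cup\{\gamma_g\};w,z)$ presents exactly the pair $(X_n(K),K_n)$, with its core knot arising from the two basepoints. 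Thus $\ov{\mathrm{HFK}}(X_n(K),K_n;\Z/2\Z)$ is the homology of the associated complex, graded by $\RelSpinC(X,K)=\Z$.

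First I would isotope $\gamma_g$ into a standard winding position inside a small region straddling $v$, so that it meets the distinguished $\alpha$-curve transversally in a controlled family of points while keeping the diagram weakly admissible. The surgered knot $K_n$ carries its own relative grading in $\RelSpinC(X_n(K),K_n)=\Z$; I would show that, for each fixed value $\spinc$ of this grading, the generators split into three families. Because $\gamma_g$ carries one longitudinal strand and meridional multiplicity $n$, the two ends of the winding region produce two ``tail'' families, which I claim are in natural bijection with the generators of the truncations $\B\{\geq\spinc\}$ and $\B\{\geq n+1-\spinc\}$, while the central family gives a full copy of $\B$. The asymmetric threshold $n+1-\spinc$ should emerge from the meridional multiplicity $n$ together with the symmetry \eqref{eq:duality} relating $\spinc$ and $J(\spinc)+\mathrm{PD}[\mu]$.

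Second I would identify the differential. Index-one disks whose domains avoid both the winding region and the basepoint $z$ descend, on each of the three families, to the differential $d_\B$ of $\B$, giving the diagonal part $(d_\B\x,d_\B\y,d_\B\z)$. The remaining, off-diagonal contributions come from short disks that cross the winding region exactly once; an area and Maslov-index computation should show that these realize precisely the inclusions $\imath_\spinc$ and $\imath_{n+1-\spinc}$ of the two truncated subcomplexes into the full complex $\B$, with no component in the reverse direction. This exhibits $C_n(\spinc)$ as the mapping cone
\[
\mathrm{Cone}\!\left(\B\{\geq\spinc\}\oplus\B\{\geq n+1-\spinc\}
\xrightarrow{\ \imath_\spinc+\imath_{n+1-\spinc}\ }\B\right),
\]
which is the assertion of the theorem; weak admissibility and the fact that only finitely many winding points carry nonzero groups for fixed $\spinc$ make $C_n(\spinc)$ finite, so the identification is honest.

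The main obstacle is the holomorphic disk count in the wound diagram: one must prove that the only off-diagonal terms are the two inclusions and that there are no higher differentials linking the three families, in particular none between the two truncated factors. This is the delicate ``gluing near the winding region'' argument, where positivity of domains and the geometry of the winding region are used to bound the domains of index-one disks and to forbid disks crossing the region more than once. An alternative, and possibly cleaner, route is to deduce the statement from the Ozsv\'ath--Szab\'o integer surgery mapping-cone formula: adding the second basepoint $z$ recovers the core knot $K_n$ rather than only $X_n(K)$, and restricting the resulting doubled cone to a single relative $\spinc$-class should display $\imath_\spinc$ and $\imath_{n+1-\spinc}$ as the vertical and horizontal maps of the cone, with the bulk of the work reduced to matching the two formalisms and verifying the index shift $n+1-\spinc$.
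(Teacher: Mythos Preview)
The paper does not prove this theorem; it merely states it and attributes the proof to the author's earlier paper \cite{Ef-rank}. There is therefore no in-paper argument to compare your proposal against.

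That said, your outline is a plausible sketch of the kind of argument one expects for such a surgery formula, and indeed the winding/large-surgery picture you describe is the standard mechanism behind results of this type going back to Ozsv\'ath--Szab\'o. You are also candid about the real difficulty: controlling the holomorphic disks in the wound diagram so that the only off-diagonal contributions are the two inclusions $\imath_\spinc$ and $\imath_{n+1-\spinc}$, with nothing running between the two truncated summands. As written, this step is asserted rather than carried out, so your proposal is a proof plan rather than a proof. Your alternative route---extracting the statement from the Ozsv\'ath--Szab\'o integer surgery mapping cone by reinstating the second basepoint and restricting to a fixed relative $\SpinC$ class---is closer in spirit to how such results are typically established, and is likely the cleaner path; but again the matching of conventions and the verification of the shift $n+1-\spinc$ would need to be written out in full. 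If you want a proof rather than a sketch, consult \cite{Ef-rank} directly.
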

Note that instead of the complex $C_n(\spinc)$ we may consider a complex 
$$\tilde{C}_n(\spinc)=\Hbb\{\geq \spinc\}\oplus \Hbb\{\geq n+1-\spinc\}\oplus \Hbb$$
which is equipped with a differential $\tilde{d}_n:\tilde{C}_n(\spinc)\ra \tilde{C}_n(\spinc)$ defined
by
$$\tilde{d}_{n}(\x,\y,\z)=(0,0,(\imath_{\spinc})_*(\x)+(\imath_{n+1-\spinc})_*(\y)).$$
The homology groups $\mathrm{H}_*(\tilde{C}_n(\spinc),\tilde{d}_n)$ and 
$\mathrm{H}_*({C}_n(\spinc),{d}_n)$ are then isomorphic.\\
There is a similar formula for the Heegaard Floer homology groups associated with the three-manifold
$X_n(K)$ which is due to Ozsv\'ath and Szab\'o. We have slightly modified the statement of their theorem
from \cite{OS-surgery} so that it looks more compatible with the notation of theorem~\ref{thm:knot-surgery-formula}.
To state the theorem, let $\pi_\spinc:\B\{\geq \spinc\}\ra \B\{\spinc\}=\frac{\B\{\geq \spinc\}}{\B\{>\spinc\}}$ 
be the projection map,
$\Xi_\spinc:\B\{\spinc\}\ra \B\{-\spinc\}$ be the duality isomorphism, 
and $\jmath_\spinc:\B\{\spinc\}\ra \B$ be the
inclusion map. Associated with any $\SpinC$ class $[\spinc]\in\SpinC(X_n(K))=\Z/n\Z$ we may construct
a chain complex
$$F_n[\spinc]=\bigoplus_{\spinc\in[\spinc]\subset \Z}C_n(\spinc)$$
which is equipped with a differential $f_n:F_n[\spinc]\ra F_n[\spinc]$ defined as follows on a generator
$(\x,\y,\z)\in C_n(\spinc)$:
\begin{equation}
\begin{split}
&f_n(\x,\y,\z)=(\alphas_\spinct)_{\spinct\in[\spinc]\subset \Z},\ \ \ \alphas_\spinct\in C_n(\spinct),\\
&\alphas_\spinct=\begin{cases}d_n(\alphas_\spinc), &\text{if }\spinct=\spinc\\
\Upsilon_\spinc(\alpha_\spinc):=\big(0,(d_\B\circ \Xi_\spinc\circ \pi_\spinc)(\x),
(\jmath_\spinc\circ \Xi_\spinc\circ \pi_\spinc)(\x)\big),\ \ \
&\text{if } \spinct=\spinc+n\\
0&\text{Otherwise}
\end{cases}
\end{split}
\end{equation}
It is not hard to see that $\Upsilon_\spinc:C_n(\spinc)\ra C_n(\spinc+n)$ is a chain map. Consequently,
$f_n:F_n[\spinc]\ra F_n[\spinc]$
defines a differential on this vector space.
\begin{thm}\label{thm:surgery-formula}
Suppose that $K\subset X$  is a  knot inside a  homology sphere $X$ of genus $g=g(K)$. For a  class $[\spinc]\in\SpinC(X_n(K))=\Z/n\Z$,
let the complex $(F_n[\spinc],f_n)$  be defined as before.
The Heegaard Floer homology group
$\ov{\mathrm{HF}}(X_n(K),[\spinc])$   may then be computed
as the homology of the chain complex $(F_n[\spinc],f_n)$:
\begin{equation}
\ov{\mathrm{HF}}(X_n(K),[\spinc])=\mathrm{H}_*(F_n[\spinc],f_n)
\end{equation}
\end{thm}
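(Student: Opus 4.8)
The plan is to deduce the theorem from the integer surgery formula of Ozsv\'ath and Szab\'o \cite{OS-surgery}: the complex $(F_n[\spinc],f_n)$ has been assembled precisely so that it is a repackaging of their mapping cone, and the whole task is to make this repackaging precise. I would organize the argument in two steps. First, Theorem~\ref{thm:knot-surgery-formula} already identifies, for each relative $\SpinC$ structure $\spinc\in\Z$ in the class $[\spinc]$, the homology of $(C_n(\spinc),d_n)$ with $\ov{\mathrm{HFK}}(X_n(K),K_n,\spinc)$; thus the direct sum $\bigoplus_{\spinc\equiv[\spinc]}C_n(\spinc)$, equipped only with the internal differentials $d_n$, computes the knot Floer homology of the dual knot $K_n$ in the $\SpinC(X_n(K))$-class $[\spinc]$. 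Second, I would recover $\ov{\mathrm{HF}}(X_n(K))$ from this by adding the part of the full differential on $\CFKa(X_n(K),K_n)$ that counts holomorphic disks crossing the second basepoint of $K_n$, and identify that part with the maps $\Upsilon_\spinc$.

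The heart of the proof is therefore a dictionary between $(F_n[\spinc],f_n)$ and the Ozsv\'ath--Szab\'o cone. I would match each third summand $\B$ of $C_n(\spinc)$, with its differential $d_\B$, to a copy of $\CFa(X)$ --- legitimate since $H_*(\B,d_\B)=\ov{\mathrm{HF}}(X)$ --- and match the truncated summands $\B\{\geq\spinc\}$ and $\B\{\geq n+1-\spinc\}$, together with the inclusions $\imath_\spinc$ and $\imath_{n+1-\spinc}$ that define $d_n$, to the ``$A$-complex'' of a large surgery and its two vertical projection maps. The remaining piece $\Upsilon_\spinc\colon C_n(\spinc)\to C_n(\spinc+n)$ is the horizontal map. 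Conceptually it is the second-basepoint contribution just described: because $K_n$ is rationally null-homologous of order $n$, the relative $\SpinC$ structures lying over a fixed class in $\SpinC(X_n(K))=\Z/n\Z$ form a single $\Z$-coset whose consecutive members differ by $n$ under the identification $\hf$, so a disk crossing that basepoint shifts $\spinc$ to $\spinc+n$ and defines a map $C_n(\spinc)\to C_n(\spinc+n)$. Adding this map to the internal differentials is exactly what converts the computation of $\ov{\mathrm{HFK}}(X_n(K),K_n)$ into a computation of $\ov{\mathrm{HF}}(X_n(K))$, giving $f_n$ on the nose.

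I expect the main obstacle to be the verification that $\Upsilon_\spinc$, written with the duality isomorphism $\Xi_\spinc\colon\B\{\spinc\}\to\B\{-\spinc\}$, really coincides with this horizontal, basepoint-crossing map. In the Ozsv\'ath--Szab\'o picture the horizontal map is a vertical projection post-composed with the flip map that interchanges the two directions of the knot filtration, and the substance of the matching is that this flip is implemented by the symmetry \eqref{eq:duality} of knot Floer homology, here encoded by $\Xi_\spinc$; the operators $\pi_\spinc$ (projecting to the top filtration level, the only part that can pass to the opposite leg) together with $d_\B$ and $\jmath_\spinc$ (depositing the output in the correct summands of $C_n(\spinc+n)$) are the degree-and-framing bookkeeping, and the claim that $\Upsilon_\spinc$ is a chain map is the concrete shadow of this compatibility. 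A final, routine point is finiteness of the infinite sum $F_n[\spinc]$: the genus detection recalled above (\cite{OS-genus}, \cite{Ni}) forces $\B\{\geq\spinc\}=\B$ for $\spinc\leq -g$ and $\B\{\geq\spinc\}=0$ for $\spinc>g$, so that for $\spinc\ll 0$ the inclusion $\imath_\spinc$ is an isomorphism and for $\spinc\gg 0$ the inclusion $\imath_{n+1-\spinc}$ is an isomorphism; in either tail a summand of the cone cancels against a copy of $\B$ in an acyclic pair, and a standard truncation --- the same one used in \cite{OS-surgery} --- replaces $F_n[\spinc]$ by a finite quasi-isomorphic complex, completing the identification.
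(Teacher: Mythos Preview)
The paper does not supply its own proof of this theorem: it is presented as a reformulation of Ozsv\'ath and Szab\'o's integer surgery formula \cite{OS-surgery}, with the statement rewritten to match the notation of Theorem~\ref{thm:knot-surgery-formula}. Your plan---to exhibit $(F_n[\spinc],f_n)$ as a repackaging of the Ozsv\'ath--Szab\'o mapping cone, with the horizontal maps realized via the duality $\Xi_\spinc$, and then truncate---is exactly this reformulation made explicit, so your approach coincides with what the paper asserts (without details).
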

Note that $\pi_\spinc$ is trivial if $|\spinc|>g(K)$, and that the homology of the
chain complex $(C_n(\spinc),d_n)$ is zero unless $-g(K)< \spinc\leq n+g(K)$. This implies that the homology of the
chain complex $(F_n[\spinc],f_n)$ is the same as the homology of the chain complex
\begin{equation}
\begin{split}
&G_n[\spinc]=\bigoplus_{\substack{\spinc\in[\spinc]\subset \Z\\ -g(K)< \spinc\leq n+g(K)}}C_n(\spinc)\\
&g_n:G_n[\spinc]\ra G_n[\spinc],\ \ \ \ g_n=f_n|_{G_n[\spinc]}.
\end{split}
\end{equation}
\begin{cor}\label{cor:compare}
Suppose that $K\subset X$ is a knot inside a homology sphere $X$ and $n>0$ is a given integer. The
 induced knot $K_n\subset X_n(K)$ has simple knot Floer homology if and only if  the maps induced in
 homology by the chain maps
 \begin{equation}
 \Upsilon_\spinc:C_n(\spinc)\lra C_n(\spinc+n)
 \end{equation}
 vanish for all $\spinc\in\RelSpinC(X,K)$.
\end{cor}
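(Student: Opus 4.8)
The plan is to read both sides of the equality defining simplicity off the two surgery formulas already established, and then to recognize the second formula as a mapping-cone construction whose first-page differential is exactly $(\Upsilon_\spinc)_*$. First, Theorem~\ref{thm:knot-surgery-formula} identifies the knot Floer homology of $K_n$ with $\bigoplus_\spinc H_*(C_n(\spinc),d_n)$, so $\rank\big(\ov{\mathrm{HFK}}(X_n(K),K_n)\big)=\sum_{\spinc\in\Z}\dim H_*(C_n(\spinc),d_n)$, while Theorem~\ref{thm:surgery-formula} gives $\rank\big(\ov{\mathrm{HF}}(X_n(K))\big)=\sum_{[\spinc]\in\Z/n\Z}\dim H_*(F_n[\spinc],f_n)$. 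By the remark preceding the corollary, each $F_n[\spinc]$ may be replaced by the finite complex $G_n[\spinc]$ built only from those $C_n(\spinc)$ with $-g(K)<\spinc\le n+g(K)$; since $H_*(C_n(\spinc),d_n)=0$ outside this range, the first sum is supported there as well, and every $C_n(\spinc)$ in the range occurs in exactly one $G_n[\spinc]$.

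Next I would exhibit the differential $f_n=d_n+\Upsilon$ on $\bigoplus_{[\spinc]}G_n[\spinc]$ as an iterated mapping cone: $d_n$ is the internal differential, while $\Upsilon=\sum_\spinc\Upsilon_\spinc$ raises the representative by one step, $\spinc\mapsto\spinc+n$. Filtering by this representative produces a spectral sequence with $E_1=\bigoplus_\spinc H_*(C_n(\spinc),d_n)$ and first differential $d_1=\bigoplus_\spinc(\Upsilon_\spinc)_*$, converging to $\bigoplus_{[\spinc]}H_*(G_n[\spinc],f_n)$. Comparing total ranks with the computations above yields the universal inequality $\rank\,\ov{\mathrm{HF}}(X_n(K))\le\rank\,\ov{\mathrm{HFK}}(X_n(K),K_n)$, with equality — i.e.\ simplicity — precisely when the sequence degenerates at $E_1$. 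The forward implication is then immediate: if $K_n$ is simple the sequence degenerates, so in particular $d_1=0$, which is the vanishing of every $(\Upsilon_\spinc)_*$.

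For the converse I would run the cone long exact sequence in the cleanest situation. Note first that $\Upsilon_\spinc$ only reads the $\B\{\ge\spinc\}$ summand of $C_n(\spinc)$ (through $\pi_\spinc$) and deposits its output in the remaining two summands of $C_n(\spinc+n)$; hence $\Upsilon_{\spinc+n}\circ\Upsilon_\spinc=0$, confirming $f_n^2=0$ and that $\Upsilon$ strictly raises the filtration by one. When each residue class $[\spinc]$ has at most two representatives in $(-g(K),n+g(K)]$ — automatic for $n\ge 2g(K)$, the regime of Theorem~\ref{thm:main} — the complex $G_n[\spinc]$ is a single mapping cone $\mathrm{Cone}\big(\Upsilon_\spinc\colon C_n(\spinc)\to C_n(\spinc+n)\big)$, and its long exact sequence gives exactly $\dim H_*(G_n[\spinc])=\dim H_*(C_n(\spinc))+\dim H_*(C_n(\spinc+n))-2\,\rank(\Upsilon_\spinc)_*$. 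Summing over $[\spinc]$ upgrades the inequality to the exact formula $\rank\,\ov{\mathrm{HF}}=\rank\,\ov{\mathrm{HFK}}-2\sum_\spinc\rank(\Upsilon_\spinc)_*$, so simplicity is equivalent to the vanishing of all $(\Upsilon_\spinc)_*$, as claimed.

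The step I expect to be the main obstacle is the converse once a residue class carries three or more supported representatives (possible when $n<2g(K)$): there the totalization is a genuine iterated cone, and a priori the vanishing of $d_1=(\Upsilon_\spinc)_*$ need not force the higher spectral-sequence differentials (secondary, Massey-type operations $C_n(\spinc)\to C_n(\spinc+2n)$) to vanish. The resolution I would pursue is to exploit the rigid slot structure of $\Upsilon_\spinc$ recorded above — it factors through $\pi_\spinc$ and lands in slots disjoint from the one it reads — to argue that the ranks can coincide only when each $(\Upsilon_\spinc)_*$ vanishes individually, so that the exact dimension count survives beyond the two-representative case. Since Theorem~\ref{thm:main} invokes the corollary only for $n\ge 2g(K)$, the two-representative argument already suffices for every application in this paper.
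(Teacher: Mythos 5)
Your comparison is the one the paper intends (its own proof is just the phrase ``immediate corollary''), and the parts you actually prove are proved correctly: the forward implication holds for every $n>0$, since simplicity forces the filtration spectral sequence of $(F_n[\spinc],f_n)$ to degenerate at $E_1$, and in particular forces $d_1=\bigoplus_\spinc(\Upsilon_\spinc)_*$ to vanish; and the mapping-cone rank count settles the converse whenever each residue class modulo $n$ meets the support range $(-g(K),n+g(K)]$ in at most two spots, which is automatic for $n\geq 2g(K)$. Your audit of how the corollary is used is also accurate: Theorem~\ref{thm:small-surgery} uses only the forward implication (at $\spinc=g(K)$), and the converse enters only in the regime $n\geq 2g(K)$ of Theorem~\ref{thm:main}, where in fact the paper independently shows that at most one representative per residue class supports nonvanishing homology, so that all spectral sequence differentials vanish for support reasons.

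The gap you flag in the converse for $n<2g(K)$ is genuine, but the repair you sketch cannot work, because the ``rigid slot structure'' of $\Upsilon_\spinc$ is precisely what makes the dangerous secondary map well defined, not what kills it. Suppose a residue class has three representatives $\spinc,\spinc+n,\spinc+2n$ in the support range, and suppose the middle complex $C_n(\spinc+n)$ happens to be acyclic. Then $(\Upsilon_\spinc)_*$ and $(\Upsilon_{\spinc+n})_*$ both vanish for trivial reasons, yet for a cycle $x\in C_n(\spinc)$ one may write $\Upsilon_\spinc x=d_n y$ and form $d_2[x]=[\Upsilon_{\spinc+n}y]\in H_*(C_n(\spinc+2n))$; this class is well defined exactly because $\Upsilon$ reads only the first slot, has image consisting of cycles, and satisfies $\Upsilon_{\spinc+n}\circ\Upsilon_\spinc=0$, and no formal property of the slots forces it to be a boundary. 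Indeed, there are complexes with all of these structural features (three slots, $\Upsilon$ factoring through $\pi_\spinc$ on the first slot, $\Upsilon^2=0$) in which $d_1=0$ but $d_2\neq0$, so that $\rank H_*(F_n[\spinc],f_n)$ is strictly smaller than $\sum_{\spinct\in[\spinc]}\rank H_*(C_n(\spinct),d_n)$: the pass-through map across an acyclic middle level is essentially arbitrary. So any proof of the ``if'' direction for $n<2g(K)$ must use input your proposal never touches---for instance the duality $\Xi_\spinc$ relating the data at $\spinc$ and $-\spinc$, or the constraint $H_*(\B,d_\B)=\ov{\mathrm{HF}}(X)$---and the same criticism applies to the paper's one-line proof of the biconditional. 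Fortunately, as you observe, this direction is never invoked in the paper: for $n<2g(K)$ only the forward implication is used, and that you have in full generality.
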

\begin{proof}
Immediate corollary of a comparison between the above two surgery formulas.
\end{proof}
\section{Small surgery on a knot}
In this section, we will assume that the surgery coefficient $n$ is small.  \\
\begin{thm}\label{thm:small-surgery}
Suppose that $K\subset X$ is a knot inside a homology sphere $X$ of Seifert genus $g(K)$.
Suppose that $0<n<2g(K)$ is a given integer and $K_n\subset X_n(K)$ is the knot obtained from $K$ by
$n$-surgery. Then $K_n$ can not have simple knot Floer homology, i.e.
\begin{equation}
\rank\big(\ov{\mathrm{HFK}}(X_n(K),K_n)\big)>\rank\big(\ov{\mathrm{HF}}(X_n(K))\big)
\end{equation}
\end{thm}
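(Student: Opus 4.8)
The plan is to reduce the statement to a single nonvanishing among the maps $\Upsilon_\spinc$, and then to produce it at the top Alexander grading. By corollary~\ref{cor:compare}, $K_n$ has simple knot Floer homology if and only if all of the maps $(\Upsilon_\spinc)_*$ induced on homology by $\Upsilon_\spinc\colon C_n(\spinc)\to C_n(\spinc+n)$ vanish; since $\rank\ov{\mathrm{HFK}}(X_n(K),K_n)\geq \rank\ov{\mathrm{HF}}(X_n(K))$ always (the surgery complex of theorem~\ref{thm:surgery-formula} is an iterated mapping cone on the $\Upsilon_\spinc$ over the pieces $C_n(\spinc)$ of theorem~\ref{thm:knot-surgery-formula}), it suffices to exhibit one index $\spinc$ with $(\Upsilon_\spinc)_*\neq 0$. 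I would take $\spinc=g=g(K)$, the top Alexander grading, which is where the genus-detection theorem guarantees $\ov{\mathrm{HFK}}(X,K,g)\neq 0$.

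Here is where the hypothesis enters: for integers, $n<2g$ is equivalent to $n+1-g\leq g$. Working in the homology model $\tilde C_n(\spinc)$ (so that $H_*(\tilde C_n(\spinc))=H_*(C_n(\spinc))$), the complex $\tilde C_n(g)$ is the mapping cone of $M_g=(\imath_g)_*\oplus(\imath_{n+1-g})_*\colon \Hbb\{\geq g\}\oplus\Hbb\{\geq n+1-g\}\to\Hbb$, while the homology of $C_n(g+n)$ is that of the cone of $(\imath_{1-g})_*\colon\Hbb\{\geq 1-g\}\to\Hbb$ (its first summand $\B\{\geq g+n\}$ vanishes, $g$ being the top grading). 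First I would build a nonzero class in $H_*(C_n(g))$ supported by the top generator. Fix a nonzero $\x\in\ov{\mathrm{HFK}}(X,K,g)=\Hbb\{\geq g\}$. Because $n+1-g\leq g$ we have the inclusion of subcomplexes $\B\{\geq g\}\subseteq\B\{\geq n+1-g\}$, so $(\imath_g)_*\x$ lies in the image of $(\imath_{n+1-g})_*$; choosing $\y$ with $(\imath_{n+1-g})_*\y=(\imath_g)_*\x$ gives $M_g(\x,\y)=0$ over $\F$, hence a nonzero class $\xi=[(\x,\y,0)]\in\ker M_g\subseteq H_*(C_n(g))$. This step is exactly what the inequality $n<2g$ buys.

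Finally I would compute $(\Upsilon_g)_*(\xi)$. Since $\Upsilon_g$ sees the first factor only through $\pi_g$, its value is governed by $\Xi_g\x\in\ov{\mathrm{HFK}}(X,K,-g)$, and its component in the $\B$-summand of $C_n(g+n)$ is the class $(\jmath_{-g})_*\Xi_g\x\in\Hbb$, which lands in $\mathrm{coker}\big((\imath_{1-g})_*\big)$. The hard part will be showing this class is nonzero in the cokernel, i.e. that the image in $\ov{\mathrm{HF}}(X)$ of the bottom generator of $\ov{\mathrm{HFK}}(X,K,-g)$ has no cycle representative in filtration level $\geq 1-g$; equivalently, since $-g<1-g$, that the extremal generator is not pushed up across the duality. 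I expect this to be the technical heart, and I would derive it from the symmetry~(\ref{eq:duality}): the duality isomorphism identifies $\mathrm{coker}\big((\imath_{1-g})_*\big)$ with a kernel detected by the top quotient of the filtration, under which $(\jmath_{-g})_*\Xi_g\x$ corresponds to $\x$, nonzero by genus detection. Granting this, $(\Upsilon_g)_*(\xi)\neq 0$, so $(\Upsilon_g)_*\neq 0$, and corollary~\ref{cor:compare} yields the strict inequality.
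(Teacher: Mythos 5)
Your reduction via corollary~\ref{cor:compare}, your choice of $\spinc=g$, and your cycle $\xi=[(\x,\y,0)]$ exploiting $n+1-g\leq g$ all coincide with the paper's proof (there the cycle is written $(\x,\x,0)$, with the inclusion $\B\{\geq g\}\subseteq\B\{\geq n+1-g\}$ suppressed). The gap is in your last step, and it is not a technicality one could expect to fill as stated: the lemma you propose is false. You claim that $(\Upsilon_g)_*(\xi)$ is detected by ``the class $(\jmath_{-g})_*\Xi_g\x\in\Hbb$'' inside $\mathrm{coker}\big((\imath_{1-g})_*\big)$. First, $\jmath_{-g}\Xi_g\x$ is in general not a cycle of $(\B,d_\B)$ --- this is exactly why $\Upsilon_g$ carries the correction term $d_\B\circ\Xi_g\circ\pi_g$ in its second component --- so it defines no class in $\Hbb$ at all. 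Second, the nonvanishing-in-the-cokernel statement fails in examples: consider the filtered complex $\B=\langle a,b,c\rangle$ with filtration levels $1,0,-1$ and $d_\B c=b$ (this is a trefoil, for one of the two chiralities, in the paper's conventions), and take $n=1$. Here $(\imath_{1-g})_*=(\imath_0)_*:\Hbb\{\geq 0\}=\langle a,b\rangle\ra\Hbb=\langle[a]\rangle$ is surjective, so $\mathrm{coker}\big((\imath_0)_*\big)=0$ and your criterion detects nothing. Nevertheless $(\Upsilon_1)_*(\xi)\neq 0$: the image cycle is $\Upsilon_1(a,a,0)=(0,d_\B c,\jmath c)=(0,b,c)$, and it is the correction term $b$, which represents a nonzero element of $\ker\big((\imath_0)_*\big)$ --- the summand your computation discards --- that carries the class.

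The repair is the paper's identification, which makes this step immediate rather than ``the technical heart''. Since $\B\{\geq g+n\}=0$, the complex $C_n(g+n)$ is the mapping cone of the inclusion of the subcomplex $\B\{\geq 1-g\}\hookrightarrow\B$, hence is quasi-isomorphic to the quotient complex $\B/\B\{\geq 1-g\}=\B\{-g\}$, whose induced differential vanishes. Thus $H_*(C_n(g+n))\cong\B\{-g\}=\ov{\mathrm{HFK}}(X,K,-g)\cong\Hbb\{g\}$, the isomorphism being induced by projecting the third summand onto the bottom filtration level. Under this projection $\Upsilon_g(\x,\y,0)=(0,d_\B\Xi_g\pi_g\x,\jmath\Xi_g\pi_g\x)$ maps to $\Xi_g\x$, which is nonzero simply because $\Xi_g$ is an isomorphism and $\x\neq 0$; no duality-versus-cokernel argument, and nothing beyond genus detection, is needed. (In the trefoil example above this projection sends $(0,b,c)$ to $c\neq 0$, as it should.)
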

\begin{proof}
Let us use corollary~\ref{cor:compare} for $\spinc=g(K)$. Note that
$\ov{\mathrm{HFK}}(X_n(K),K_n;{g(K)})$ is isomorphic to the homology of the mapping cone
$$C_n(g(K))=\B\{g\}\oplus \B\{>n-g(K)\}\oplus \B,$$
which is equipped with the differential $d_n$ as before.
Furthermore, 
$$C_n(g(K)+n)=\B\{\geq g(K)+n\}\oplus \B\{>-g(K)\}\oplus \B=0\oplus \B\{>-g(K)\}\oplus \B$$
and its homology is thus isomorphic to $\Hbb\{-g(K)\}=\Hbb\{g(K)\}$.
 The map
$$(\Upsilon_{g(K)})_*:\mathrm{H}_*(C_n(g(K)),d_n)\ra \Hbb\{g(K)\}$$
is thus by the map sending
$$\Hbb\{g(K)\}=\B\{g(K)\}\subset C_n(g(K))=\B\{g(K)\}\oplus\B\{>n-g(K)\}\oplus\B$$ to
$\Hbb\{g(K)\}$ in the target vector space by the identity map.
In order for this map to be trivial, we will need  the map $\Hbb\{g(K)\}\ra \Hbb$ induced by inclusion to be 
injective and its image to be disjoint from the image of the map
$\Hbb\{>n-g(K)\}\ra \Hbb$ induced by inclusion. Let $\x$ be a generator of $\Hbb\{g(K)\}$ which is mapped to a non-trivial
element $[\x]\in\Hbb$. Thus, $\x$ is not in the image of $d_\B\ra d_\B$. If $n<2g(K)$, $n-g(K)\leq g(K)-1$ and
$\B\{>n-g(K)\}$ contains $\x$ as a closed generator. Since $\x$ is not in the image of $d_\B$, it survives in
$\Hbb\{>n-g(K)\}$ and is mapped to the same class $[\x]\in \Hbb$ by $(\imath_{n+1-g(K)})_*$. Thus, $(\x,\x,0)$
is a non-trivial element in $\mathrm{H}_*(C_n(g(K)),d_n)$ which is mapped to $\x\in\Hbb\{g(K)\}$ by
$(\Upsilon_{g(K)})_*$. This contradiction proves the theorem.
\end{proof}
\section{Large surgery on a knot}
Suppose now that $n\geq2g(K)$. As before, all the maps $(\Upsilon_\spinc)_*$ should vanish. If the relative
$\SpinC$ structure $\spinc\in\RelSpinC(X,K)=\Z$ satisfies $-g(K)<\spinc\leq g(K)$, we would have
$n-\spinc\geq g(K)$ and $n+\spinc>g(K)$, which imply that $\Hbb\{\geq \spinc+n\}$ and $\Hbb\{>n-\spinc\}$
are trivial. Thus $C_n(\spinc)=\Hbb\{\geq \spinc\}\oplus0\oplus \Hbb$ and
$C_n(\spinc+n)=0\oplus \Hbb\{>-\spinc\}\oplus \Hbb$. This implies that
\begin{equation}
\begin{split}
\mathrm{H}_*(C_n(\spinc),d_n)&=\Hbb\{<\spinc\},\ \ \& \\
\mathrm{H}_*(C_n(\spinc+n),d_n)&=\Hbb\{\leq \spinc\}. \\
\end{split}
\end{equation}
Under the above identifications the map $(\Upsilon_\spinc)_*$
which will be be denoted by $\epsilon_\spinc:\Hbb\{<\spinc\}\ra \Hbb\{\leq -\spinc\}$
may be described as follows. It is the map  obtained by first taking
$\Hbb\{<\spinc\}$ to $\Hbb\{\spinc\}$ using the map $\tau_\spinc$ induced by the differential
$d_\B$ of the complex $\B$, then using the duality map $\taub_\spinc$
to take $\Hbb\{\spinc\}$ to $\Hbb\{-\spinc\}$, and finally going from $\Hbb\{-\spinc\}$ 
to $\Hbb\{\leq -\spinc\}$ using
the map $q_{-\spin}$ induced in homology by the inclusion of the first vector space in the quotient complex
$\B\{\leq -\spinc\}$ of $\B$. The assumption that $K_n\subset X_n(K)$ has simple knot Floer homology
then implies that all the maps $\epsilon_\spinc$ should vanish.\\

We may now prove the following theorem, which should be compared with theorem~2.4 from \cite{Matt}.
\begin{thm}\label{thm:large-surgery}
Let $K\subset X$ be a knot in a homology sphere $L$-space $X$ of Seifert genus $g(K)$, and fix the 
above notation.
If for all $-g(K)<\spinc\leq g(K)$ the maps $\epsilon_\spinc$ vanish, there is an increasing sequence of integers
$$-g(K)=n_{-k}<n_{1-k}<...<n_k=g(K)$$
with $n_i=-n_{-i}$ for which the following is true. For $i\in \Z$ with $|i|\leq g(K)$ define
$$\delta_i=\begin{cases}
0+d(Y) &\text{if }i=g(K)\\
\delta_{i+1}+2(n_{i+1}-n_i)-1 &\text{if } i<g(K),\ \&\ g(K)-i\equiv 1\ (\text{mod }2)\\
\delta_{i+1}+1 &\text{if } i<g(K),\ \&\ g(K)-i\equiv 0\ (\text{mod }2).\\
\end{cases} $$
In this situation, $\ov{\mathrm{HFK}}(K,\spinc)=0$ unless $\spinc=n_i$ for some $-k\leq i\leq k$, in which case
$\ov{\mathrm{HFK}}(K,\spinc)=\F$ and it is supported entirely in homological degree $\delta_i$.
 If the generator of $\ov{\mathrm{HFK}}(K,n_i)$
is denoted by $\x_i$, the filtered chain complex $(\B(K),d_\B)$ may be described as 
$\B=\langle \x_{-k},...,\x_k\rangle$, where the differential is given by
$$d_\B(\x_i)=\begin{cases}
0\ &\text{if }i=k,\ \text{or } 0<k-i \text{ is odd,}\\
\x_{i+1}\ &\text{otherwise.}
\end{cases}$$
\end{thm}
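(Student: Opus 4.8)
The plan is to convert the vanishing of the maps $\epsilon_\spinc$ into a rigid combinatorial description of the filtered complex $(\B,d_\B)$, built from the top filtration level downward. First I would pass to a reduced model: since $\F=\Z/2\Z$ is a field, $(\B,d_\B)$ is filtered chain homotopy equivalent to a complex whose underlying graded space is $\bigoplus_\spinc \ov{\mathrm{HFK}}(X,K,\spinc)$ and whose differential strictly raises the $\SpinC$-filtration (the associated graded differential vanishes because $\B$ is already a homology group). Such a complex is a direct sum of ``arrows'' $a\to b$, with the level of $a$ strictly below that of $b$, together with one-dimensional ``dot'' summands. Because $X$ is an $L$-space, $H_*(\B,d_\B)=\ov{\mathrm{HF}}(X)=\F$, so there is exactly one dot and the remaining generators pair into arrows. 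Writing $V_\spinc=\ov{\mathrm{HFK}}(X,K,\spinc)$, one checks that $\mathrm{im}(\tau_\spinc)$ is spanned by the arrow-\emph{targets} in $V_\spinc$ and that $\ker(q_{-\spinc})$ is spanned by the arrow-targets in $V_{-\spinc}$; hence the hypothesis reads
\[
\epsilon_\spinc=0 \quad\Longleftrightarrow\quad \taub_\spinc\big(\mathrm{hit}(V_\spinc)\big)\subseteq \mathrm{hit}(V_{-\spinc}),
\]
where $\mathrm{hit}(V_\spinc)\subseteq V_\spinc$ is the span of the arrow-targets. As $\taub_\spinc$ is an isomorphism, running this at $\spinc$ and at $-\spinc$ gives $\dim\mathrm{hit}(V_\spinc)=\dim\mathrm{hit}(V_{-\spinc})$.

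Next I would anchor an induction at the top level. By the genus-detection theorem quoted above, $g=g(K)$ is the largest level with $V_g\neq 0$, and by \eqref{eq:duality} $-g$ is the smallest. At $\spinc=g$ the maps $q_{-g}$ and $\taub_g$ are isomorphisms (nothing lies below $-g$ or above $g$), so $\epsilon_g=0$ forces $\tau_g=0$. Feeding this into the long exact sequence of $0\to \B\{\geq g\}\to\B\to\B\{<g\}\to 0$ together with $\Hbb=\F$ yields $\dim V_g=1$, shows that its generator $\x_k$ represents the generator of $\Hbb$, and---crucially---shows that $\B\{<g\}$ is \emph{acyclic}. Set $n_k=g$ and $n_{-k}=-g$.

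Then I would propagate downward. Acyclicity of $\B\{<g\}$ means every generator below level $g$ cancels against another, so its top level $m$ consists entirely of boundaries, i.e. $\mathrm{hit}(V_m)=V_m$. Applying $\epsilon_m=0$ gives $\mathrm{hit}(V_{-m})=V_{-m}$; but $-m$ is the second-lowest occupied level and the only occupied level beneath it is $-g$, where $\dim V_{-g}=1$. A single generator can hit at most a one-dimensional space, so $\dim V_{-m}\leq 1$, whence $\dim V_m=\dim V_{-m}=1$ by duality. Cancelling the resulting arrow by Gaussian elimination produces a smaller acyclic filtered complex, and I would iterate, each stage identifying the next occupied level $n_{k-1}>n_{k-2}>\cdots$, forcing rank one there, and recording one arrow. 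The symmetry $n_{-i}=-n_i$ follows from $\dim V_\spinc=\dim V_{-\spinc}$, and tracking which generator is a source and which a target at each stage reproduces exactly $d_\B(\x_i)=\x_{i+1}$ when $k-i$ is even and $d_\B(\x_i)=0$ otherwise. For the gradings, the dot $\x_k$ carries the degree $d(Y)$ recorded in the statement (the homological degree of the generator of $\Hbb$), so $\delta_k=d(Y)$; each arrow $\x_i\to\x_{i+1}$ drops Maslov grading by one, which gives $\delta_i=\delta_{i+1}+1$ across the even steps, while the grading shift of $2\spinc$ built into the duality \eqref{eq:duality}, combined with the gaps $n_{i+1}-n_i$, gives $\delta_i=\delta_{i+1}+2(n_{i+1}-n_i)-1$ across the odd steps.

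I expect the main obstacle to be the downward induction itself. One must show that, after peeling off each cancelling pair, acyclicity persists in exactly the form needed and that the uniqueness of the bottom generator keeps bounding the number of targets, so that rank one is forced at \emph{every} occupied level and the arrows alternate precisely as claimed---all while keeping the Maslov gradings bookkept in order to recover the two-case recursion for $\delta_i$. The delicacy is that $\epsilon_\spinc$ couples level $\spinc$ to level $-\spinc$ through the duality $\taub_\spinc$ and to all lower levels through $\tau_\spinc$ and $q_{-\spinc}$, so the inductive hypothesis must carry along both the acyclicity of the relevant subquotients and the duality-matching of generators to keep the staircase pattern rigid.
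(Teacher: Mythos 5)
Your framework---the arrow/dot normal form of $(\B,d_\B)$, the identification of $\mathrm{im}(\tau_\spinc)$ with the span of arrow-targets in $V_\spinc$ and of $\ker(q_{-\spinc})$ with the arrow-targets in $V_{-\spinc}$, and the anchor at $\spinc=g(K)$ forcing $\tau_{g}=0$, $\dim V_{g}=1$ and acyclicity of $\B\{<g\}$---is sound, and is essentially the paper's argument recast in normal-form language; your first inductive step (the arrow from level $-g$ to level $n_{1-k}$) is also correct. The gap is in the iteration. After you cancel that arrow, the smaller acyclic complex has occupied levels $n_{2-k},\dots,n_{k-1}$, and this set is \emph{not} symmetric about $0$: the duality reflection of its top level $n_{k-1}$ is $n_{1-k}$, a level you have just deleted. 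The hypotheses $\epsilon_\spinc=0$ are tied to the original reflection $\spinc\mapsto-\spinc$, not to the truncated complex, so repeating your stage-one argument verbatim (``top level is all targets, apply $\epsilon$, bound by the dimension of the lowest level'') only returns the statement $\mathrm{hit}(V_{n_{1-k}})=V_{n_{1-k}}$, which you already know, and says nothing whatsoever about $n_{k-2}$. As described, the induction stalls after one step.

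What is actually needed is an alternation, which is exactly the even/odd case split in the paper's proof. At the second stage the argument must run in the opposite direction: since the generators at $n_{-k}$ and $n_{1-k}$ are already paired with each other, no unused sources remain below $n_{2-k}$, hence $\mathrm{hit}(V_{n_{2-k}})=0$; now apply $\epsilon_{n_{k-2}}=0$ at the \emph{positive} level to get $\taub(\mathrm{hit}(V_{n_{k-2}}))\subseteq\mathrm{hit}(V_{n_{2-k}})=0$, so no arrow ends at level $n_{k-2}$; therefore every generator of $V_{n_{k-2}}$ is an arrow-source, its target can only lie at $n_{k-1}$ (level $g$ being excluded by the anchor), and disjointness of arrows gives $\dim V_{n_{k-2}}\leq\dim V_{n_{k-1}}=1$, producing the arrow $n_{k-2}\to n_{k-1}$. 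Subsequent stages alternate between this ``no sources left below, so no targets above'' argument (producing the top arrows) and your ``all targets at the top, so all targets at the reflected bottom'' argument (producing the bottom arrows). This is precisely the structure of the paper's induction, where the even case shows $\tau_\spinc$ is an isomorphism and forces $q_{-\spinc}=0$, while the odd case shows $q_{-\spinc}$ is an isomorphism and forces $\tau_\spinc=0$. Your grading bookkeeping goes through once this alternating induction is in place; but as written, the proposal names the key difficulty in its final paragraph without supplying the mechanism that resolves it, so it is not yet a proof.
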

\begin{proof}
Let us assume that
$$-g(K)=n_{-k}<n_{1-k}<...<n_k=g(K)$$
are the values for $\spinc$ so that the associated vector space $\Hbb\{\spinc\}$ is non-trivial. From the
duality isomorphism of knot Floer homology groups we thus know that $n_{-i}=-n_i$ for each $i$.
Let us denote the generators of $\Hbb\{n_i\}$ by $\y_i^1,\y_i^2,...,\y_i^{\ell_i}$.
We may assume that under the duality map $\taub(\y_{-i}^j)=\y_i^j$.
From the argument of our previous section, we know that $\ell_k=1$, and we may thus denote $\y_k^1$ by $\x_k$ and
$\y_{-k}^1$ by $\x_{-k}$. We also know that $\x_k$ survives in the homology of $\B(K)$, i.e. it is not in the
image of $d_\B$.\\
We will prove the following claim by an induction on $i$, which implies the above theorem.\\
{\bf{Claim.}} {\emph{In the above situation, $\ell_k=\ell_{k-1}=...=\ell_{k-i}=1$. Furthermore, we may choose
$\x_{k-j}\in \B\{\geq n_{k-j}\}$ and $\x_{j-k}\in\B\{\geq n_{j-k}\}$ so that they  survive in the homology of
quotient complexes $\Hbb\{n_{k-j}\}=\B\{\geq n_{k-j}\}/\B\{> n_{k-j}\}$ and
$\Hbb\{n_{j-k}\}=\B\{\geq n_{j-k}\}/\B\{> n_{j-k}\}$ respectively, so that
\begin{equation}\begin{split}
d_\B(\x_{k-j})&=\begin{cases}
0\ &\text{if }j=0,\ \text{or } j \text{ is odd,}\\
\x_{k-j+1}\ &\text{otherwise.}
\end{cases},\ \ j=0,1,...,i\\
 d_\B(\x_{j-k})&=\begin{cases}
0\ &\text{if } j \text{ is odd,}\\
\x_{j-k+1}\ &\text{otherwise.}
\end{cases},\ \ \ \ \ \ \ \ \  j=0,1,...,i-1
\end{split}\end{equation}
}}\\
From the above considerations, the case $i=0$ is already proved. Now, assume that the claim is true for $i$. We will
prove that it will also follow for $i+1$. We will need to consider two cases depending
on the parity of $i$.\\
First, assume that $i$ is even. For any value of $\spinc$, we have 
$\epsilon_\spinc=q_{-\spinc}\circ \taub_\spinc\circ \tau_\spinc=0$. 
If the map $\tau_\spinc:\Hbb\{<\spinc\}\ra \Hbb\{\spinc\}$ is
not an isomorphism, the homology group $\Hbb\{\leq \spinc\}$ would be non-trivial.
However, the differential $d_\B$ of the complex $\B$ induces a map $p_\spinc:\Hbb\{\leq \spinc\}\ra
\Hbb\{>\spinc\}$ and its image does not cover the class of $\x_k$ in the target. Since the homology of the 
mapping cone of $p_\spinc$ is $\Hbb$, this means that $p_\spinc$ is injective and that the rank of 
$\Hbb\{\leq \spinc\}$ is one less that the rank of $\Hbb\{>\spinc\}$.   
If we set $\spinc=n_{k-i-1}$, since there are no generators associated with relative $\SpinC$ classes
$n_{k-i-1}<\spinct<n_{k-i}$, we have $\Hbb\{>\spinc\}=\Hbb\{\geq n_{k-i}\}=\F$ by induction hypothesis.
The above observation thus implies that $\Hbb\{\leq n_{k-i-1}\}$ is trivial. Thus the map $\tau_{n_{k-i-1}}$
is an isomorphism. Since for $\spinc=n_{k-i-1}$, $\tau_\spinc$ and $\taub_{\spinc}$ are isomorphisms and 
$\epsilon_\spinc$ is trivial, we may conclude that $q_{-\spinc}=0$. This last assumption is equivalent to 
the assumption that $\tau_{-\spinc}:\Hbb\{<-\spinc\}\ra \Hbb\{-\spinc\}$ is surjective.
However, for $\spinc=n_{k-i-1}$, setting $j=i+1-k$ we have
$$\Hbb\{<-\spinc\}=\Hbb\{\leq n_{i-k}\}=\langle\x_{i-k}\rangle
,\ \&\ \ \Hbb\{-\spinc\}=\langle \y^{1}_{j},...,\y^{\ell_{j}}_j\rangle.$$
Surjectivity of $\tau_{-\spinc}$ implies that $\ell_j=1$ and that $\tau_{-\spinc}$ is an isomorphism. 
Setting $\x_{i+1-k}=\y^1_{i+1-k}$, this means that $d_\B(\x_{i-k})$ is equal to $\x_{i+1-k}$ plus 
terms in higher filtration levels. 
After a suitable change of basis for the filtered chain complex we may assume that $d_\B(\x_{i-k})= \x_{i+1-k}$.
On the other hand, $\Hbb\{n_{k-i-1}\}=\Hbb\{n_{i+1-k}\}=\langle\x_{i+1-k} \rangle$ is generated by a 
generator $\x_{k-i-1}=\taub(\x_{i+1-k})$. Since the homology group $\Hbb\{\geq n_{k-i}\}=\Hbb\{>n_{k-i-1}\}$
is generated by $\x_k$ and the image of $\x_{k-i-1}$ under $d_\B$ in $\Hbb\{>n_{k-i-1}\}$ 
can not be equal to $\x_k$, 
after another suitable change of basis for the filtered chain complex $\B$
we may assume that $d_\B(\x_{k-i-1})=0$. This completes the proof of the assertions of the above claim 
for $i+1$ when $i$ is even.\\
If $i$ is odd, $\Hbb\{\leq n_{i-k}\}$ is trivial by induction hypothesis. This implies that $\Hbb\{\leq n_{i+1-k}\}$
is isomorphic to $\Hbb\{n_{i+1-k}\}$ and the isomorphism is given by the map 
$$q_{n_{i+1-k}}:\Hbb\{n_{i+1-k}\}\lra \Hbb\{\leq n_{i+1-k}\}.$$
Since $\epsilon_{n_{k-i-1}}$ is trivial, we may conclude that $\tau_{n_{k-i-1}}$ is trivial. 
In other words, $\Hbb\{n_{k-i-1}\}$ is not in the image of $d_\B$. As a result, the map $r_{n_{k-i-1}}$
from $\Hbb\{n_{k-i-1}\}$ to $\Hbb\{>n_{k-i-1}\}=\Hbb\{\geq n_{k-i}\}$, 
induced by the differential $d_\B$,  is injective. From  induction
hypothesis we may conclude that $\Hbb\{\geq n_{k-i}\}=\langle \x_{k-i},\x_k\rangle$, and $\x_k$ is 
not in the image of $d_\B$. Thus $\Hbb\{n_{k-i-1}\}$ is generated by a single generator 
$\x_{k-i-1}=\y^{1}_{k-i-1}$ and we may assume that $d_\B(\x_{k-i-1})=\x_{k-i}$. Since 
$\Hbb\{<n_{i+1-k}\}=\Hbb\{\leq n_{i-k}\}$ is trivial, $\x_{i+1-k}=\taub(\x_{k-i-1})$ is not in the image 
of $d_\B$ and the assertions of the above claim are thus satisfied for $i+1$ if $i$ is odd.
This completes the induction and proves the above claim.\\ 
\\
In order to complete the proof of the theorem, let us denote the homological degree of the generator 
$\x_j$ by $\delta_j=\delta(\x_j)$. Since $\Hbb$ is generated by $\x_k$, it is clear that $\delta_k=d(X)$ and since 
for even values of $i$, $d_\B(\x_{k-i})=\x_{k-i+1}$, we have $\delta_{k-i}=\delta_{k-i+1}+1$.
Also, note that $\delta_{-j}=2n_j+\delta_j$. Thus for any odd value of $i$ we have
\begin{equation}\begin{split}
\delta_{k-i}=-2n_{k-i}+\delta_{i-k}&=-2n_{k-i}+\delta(d_\B(\x_{i-k-1}))\\ 
&=-2n_{k-i}+(\delta_{i-k-1}-1)\\
&=-2n_{k-i}+2n_{k-i+1}+\delta_{k-i+1}-1.
\end{split}\end{equation}
This completes the proof of the theorem.
\end{proof}
We are almost done with the proof of our main theorem in this paper (theorem~\ref{thm:main}).\\
\begin{proof}(of theorem~\ref{thm:main})
If $n\geq 2g(K)$ and $K_n$ has simple knot Floer homology, 
we have seen that the hypothesis of theorem~\ref{thm:large-surgery} are satisfied. Thus 
the filtered chain complex $(\B(K),d_\B)$ has the special form 
described in theorem~\ref{thm:large-surgery}. From here, it is clear that all the maps
$\epsilon_\spinc:\Hbb\{<\spinc\}\ra\Hbb\{\leq -\spinc\}$ vanish. In fact, for any given 
relative $\SpinC$ structure $\spinc$ satisfying $-g(K)<\spinc\leq g(K)$ precisely one of the 
two groups $\Hbb\{<\spinc\}$ and  $\Hbb\{\leq -\spinc\}$ is isomorphic to 
$\Z/2\Z$ and the other one is trivial. Thus $\ov{\mathrm{HF}}(X_n(K),[\spinc])=\Z/2\Z$ by our 
previous considerations. The isomorphism  $\ov{\mathrm{HF}}(X_n(K),[\spinc])=\ov{\mathrm{HF}}(X)=\Z/2\Z$
is clear for other $\SpinC$ structures. Thus, $X_n(K)$ is a $L$-space and one direction of the 
theorem is proved.\\
For the other direction, suppose that $(\B(K),d_\B)$ has the structure described in theorem~\ref{thm:large-surgery}.
Theorem~\ref{thm:knot-surgery-formula} then gives a description of the Floer homology 
groups $\ov{\mathrm{HFK}}(X_n(K),K_n;\spinc)$
for different values of $\spinc\in\RelSpinC(X,K)=\Z$ as the homology of the mapping cone of a map 
\begin{equation}
h_n:\Hbb\{\geq \spinc\}\oplus\Hbb\{> n-\spinc\}\ra \Hbb=\Z/2\Z.
\end{equation} 
If $\spinc\leq g(K)$, $n-\spinc\geq n-g(K)\geq g(K)$ and $\Hbb\{>n-g(K)\}=0$. In this 
situation $\ov{\mathrm{HFK}}(X_n(K),K_n;\spinc)
=\Hbb\{<\spinc\}$ and 
$$\Hbb\{<\spinc\}=\begin{cases}0\  &\text{if } n_{k-i-1}<\spinc\leq n_{k-i},\ \&\ i\ \text{is even}\\
\Z/2\Z\  \ &\text{if } n_{k-i-1}<\spinc\leq n_{k-i},\ \&\ i\ \text{is odd}\\
0 &\text{if } \spinc\leq n_{-k}=-g(K)\\
\end{cases}.$$ 
On the other hand,  a similar argument shows that for $s>g(K)$ we have $\ov{\mathrm{HFK}}(X_n(K),K_n;\spinc)
=\Hbb\{<\spinc\}$ and 
$$\Hbb\{\leq n-\spinc\}=\begin{cases}0\  &\text{if } n-n_{k-i}<\spinc\leq n- n_{k-i-1},\ \&\ i\ \text{is even}\\
\Z/2\Z\  \ &\text{if } n-n_{k-i}<\spinc\leq n-n_{k-i-1},\ \&\ i\ \text{is odd}\\
0 &\text{if } \spinc> n-n_{-k}=n+g(K)\\
\end{cases}.$$
For a relative $\SpinC$ structure $\spinc\in\SpinC(X,K)=\Z$ the non-triviality assumption 
$\ov{\mathrm{HFK}}(X_n(K),K_n;\spinc)\neq 0$
 thus implies that $-g(K)<\spinc\leq n+g(K)$.
If for two relative $\SpinC$ structures $\spinc$ and $\spinct$ both $\ov{\mathrm{HFK}}(X_n(K),K_n;\spinc)$ and 
$\ov{\mathrm{HFK}}(X_n(K),K_n;\spinct)$ are non-trivial and $\spinct-\spinc$ 
is a positive multiple of $n$, we should have $-g(K)<\spinc\leq g(K)$ and $\spinct=n+\spinc$.
Furthermore, $n_{k-i-1}<\spinc\leq n_{k-i}$ for an even integer $i$. Thus
\begin{equation}
\begin{split}
n-n_{k-(2k-i-1)}=n+n_{k-i-1}<&\spinct=\spinc+n\\ 
&\leq n+n_{k-i}=n-n_{k-(2k-i-1)-1},
\end{split}
\end{equation} 
where $j=2k-i-1$ is an odd integer. 
From the above description we have $\ov{\mathrm{HFK}}(X_n(K),K_n;\spinct)=0$, which is 
a contradiction. Thus from all
 relative $\SpinC$ classes in $\RelSpinC(X,K)$ in the congruence class of 
$\spinc$ modulo $n$, at most one of them, say $\spinct$ has the property that
$\ov{\mathrm{HFK}}(X_n(K),K_n;\spinct)\neq 0$. This clearly implies that
$K_n$ has simple knot Floer homology, completing the proof.
\end{proof}

\end{document}